\documentclass{article}
\usepackage{amsthm,amsmath,amssymb,authblk}
\usepackage{amsthm}
\usepackage{hyperref}
\usepackage{pst-all}
\usepackage{enumerate}
\usepackage{enumitem}
\usepackage{tikz}
\usepackage[margin=1.4in]{geometry}
\usepackage[labelformat=simple]{subfig}
\newtheorem{thm}{Theorem}
\newtheorem{lem}[thm]{Lemma}
\newtheorem{prop}[thm]{Proposition}

\theoremstyle{definition}

%

\newcommand{\CCC}{\mathbb{C}}
\newcommand{\NNN}{\mathbb{N}}

\begin{document}
\title{A set of chromatic roots which is dense in the complex plane and closed under multiplication by positive integers}
\author{Adam Bohn}
\affil{Mathematics Research Institute\\ University of Exeter\\ Exeter, EX4 4QF\\ UK\\\tt a.s.bohn@exeter.ac.uk}

\maketitle

\begin{abstract}
We study a very large family of graphs, the members of which comprise disjoint paths of cliques with extremal cliques identified.  This broad characterisation naturally generalises those of various smaller families of graphs having well-known chromatic polynomials.  We derive a relatively simple formula for an arbitrary member of the subfamily consisting of those graphs whose constituent clique-paths have at least one trivial extremal clique, and use this formula to show that the set of all non-integer chromatic roots of these graphs is closed under multiplication by natural numbers.  A well-known result of Sokal then leads to our main result, which is that there exists a set of chromatic roots which is closed under positive integer multiplication in addition to being dense in the complex plane.  Our findings lend considerable weight to a conjecture of Cameron, who has suggested that this closure property may be a generic feature of the chromatic polynomial.  We also hope that the formula we provide will be of use to those computing with chromatic polynomials.
%
%
\end{abstract}
\section{Introduction}
Given some $q\in\NNN$ and a graph $G$ (all graphs will be assumed to be simple, that is without loops or multiple edges), a \emph{proper $q$-colouring} of $G$ is a function from the vertices of $G$ to a set of $q$ colours, with the property that adjacent vertices receive different colours.  The \emph{chromatic polynomial} $P_G(X)$ of $G$ is the unique monic polynomial which, when evaluated at $q$, gives the number of proper $q$-colourings of $G$.  The chromatic polynomial has been the subject of a huge amount of research since it was introduced over a century ago by Birkhoff \cite{Birkhoff:Determinant}.  For a comprehensive introduction to this polynomial see \cite{read:introduction}.

A \emph{chromatic root} of a graph is simply a zero of that graph's chromatic polynomial.  Often we only aim to know whether or not a given number $\alpha$ is a zero of \emph{some} chromatic polynomial, in which case we simply say that $\alpha$ is (not) a chromatic root, without specifying a graph.

Although a number of papers have been published on graphs having integral chromatic roots (\cite{Dong:Chordal},\cite{Luca:Integer}), these tend to be either uninteresting or else very rare, and the bulk of the relevant literature focuses  on non-integral zeros of chromatic polynomials.  These are somewhat mysterious, in that they are produced by a simple counting process and yet have no obvious combinatorial interpretation.  With certain notable exceptions, such as Sokal's discovery \cite{Sokal:Potts} that the maximum degree of a graph determines a bound on the absolute values of its chromatic roots, little progress has been made towards an understanding of the mechanism relating a graph's abstract structure to properties of its non-integer chromatic roots.

On the other hand, the application of analytic methods to this subject has proved resoundingly successful in determining the distribution of chromatic roots on the real line and in the complex plane.  The most important results in this area can be summarised by the following theorem,  the three parts of which are due, respectively, to Jackson \cite{Jackson:Interval}, Thomassen \cite{Thomassen:Intervals} and Sokal \cite{Sokal:Dense}:
\paragraph{Theorem}
The zero-free intervals and regions for the chromatic polynomial are entirely classified by the following 3 results:
\begin{enumerate}[label=(\emph{\roman{enumi})}]
\item There are no negative real chromatic roots, and none in either of the intervals $(0,1)$ or $(1,32/27]$
\item Chromatic roots are dense in the remainder of the real line
\item Chromatic roots are dense in the whole complex plane
\end{enumerate}

These results collectively represent quite a triumph in the study of chromatic polynomials: the results of Jackson and Thomassen provide a complete classification of those intervals of the real line in which zeros of chromatic polynomials do not occur, and Sokal's theorem---which the present work builds upon---states that there are no analogous ``forbidden regions'' for complex chromatic roots\footnote{It should be stressed that Sokal's findings do not preclude the existence of curves which contain no chromatic roots; in fact, it is widely suspected that there are no purely imaginary chromatic roots}.  However, they are of little help when it comes to determining whether or not a given number having no conjugates in forbidden intervals is a chromatic root of some graph.  As chromatic polynomials are monic with integer coefficients, their zeros are by definition algebraic integers; our current lack of insight into the converse question of which algebraic integers are chromatic roots is largely what motivates the current work.

The main result of this paper is the following extension of Sokal's result, which we shall prove by explicit construction.

\begin{thm}\label{thm:main}
There exists a set of chromatic roots which is dense in the complex plane, and closed under multiplication by natural numbers.
\end{thm}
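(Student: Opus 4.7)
The plan, as signposted by the abstract, is constructive: exhibit a family of graphs whose chromatic roots simultaneously form a dense subset of $\CCC$ and a set closed under multiplication by positive integers. I would begin by formalising the family. A \emph{clique-path} is built from a sequence $K_{s_1}, K_{s_2}, \ldots, K_{s_m}$ of cliques by identifying a single vertex between each consecutive pair; the two end cliques are its \emph{extremal} cliques. The full family consists of several clique-paths joined by identifying extremal cliques according to some incidence pattern, and the subfamily of interest is the one in which at least one extremal clique of each constituent path is trivial (a single vertex).

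Next I would derive the chromatic polynomial of an arbitrary member of this subfamily. Because chordal gluings give $P_{G_1 \cup G_2}(x) = P_{G_1}(x) P_{G_2}(x) / P_{K_r}(x)$ whenever $G_1$ and $G_2$ meet in a $K_r$, the chromatic polynomial of a single clique-path factors as a product of falling factorials divided by powers of $x$. Arguing by induction on the number of paths and carefully tracking shared vertices at each identified extremal clique, I would expect a closed-form expression of the schematic shape
\[
P_G(x) = \bigl(\text{product of falling factorials}\bigr) \cdot \frac{R(x)}{x^{N}},
\]
where $R(x)$ is a polynomial encoding the global identification pattern, depending in a controlled way on path lengths and clique sizes. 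The crucial feature to isolate is that by varying these combinatorial parameters one can conjure specific target roots, much as when tuning the parameters of a generalised theta graph.

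The central algebraic step is then to show that if $\alpha$ is a non-integer root of $P_G$ for some $G$ in the subfamily and $n \in \NNN$, then there is another graph $G'$ in the subfamily with $n\alpha$ as a root. Since a rational algebraic integer is a rational integer, multiplication by $n$ keeps any non-integer chromatic root outside $\mathbb{Z}$, so we automatically remain in the non-integer locus. To finish, I would invoke Sokal's density theorem: either his construction already produces graphs in the subfamily, or --- more likely --- his underlying generalised theta graphs can be re-encoded as clique-paths with trivial extremal cliques, so that his dense set of chromatic roots is realised by our formula. The union of all non-integer chromatic roots arising from the subfamily is then dense in $\CCC$ by Sokal and closed under $\NNN$-multiplication by the previous step, which proves the theorem.

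The main obstacle I foresee is the interplay between the closure step and the density step. Closure under $\NNN$-multiplication is essentially an algebraic consequence of the closed-form formula, but density requires ensuring the subfamily is genuinely rich enough to host Sokal-type limit points. Verifying that Sokal's construction can be embedded --- or mimicked --- within the clique-path framework is the delicate point on which the whole argument ultimately rests.
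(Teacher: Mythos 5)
Your plan captures the broad architecture of the paper --- a large parametrised family of clique-blown-up graphs, a closed-form chromatic polynomial, closure of non-integer roots under $\NNN$-multiplication via a scaling argument, and an appeal to Sokal to get density --- and that is indeed the right architecture. Two points, however, need attention; the second is a genuine gap.

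First, your definition of a clique-path is not the paper's. You describe consecutive cliques as sharing a single identified vertex; the paper's clique-path $L(a_1,\dots,a_n)$ instead replaces each path edge by \emph{all} edges between the two neighbouring cliques, so consecutive cliques together form a complete graph $K_{a_i+a_{i+1}}$. This matters: with the all-edges convention, setting every $a_i=1$ recovers an ordinary path, and identifying the two singleton endpoints of several such clique-paths recovers exactly a generalised theta graph, which is how Sokal's family sits inside the new one. With the shared-vertex convention a chain of $K_1$'s collapses to a single vertex, and you would not be able to realise theta graphs at all, so the re-encoding step of your argument would fail.

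Second, and more seriously, you invoke Sokal's density theorem as if it delivered density of theta-graph chromatic roots in all of $\CCC$. It does not: Sokal's theorem on generalised theta graphs gives density only outside the disc $\{z : |z-1| < 1\}$, and the full density statement is obtained by the separate observation that joining any graph to $K_2$ translates its chromatic roots by $2$. This is precisely where the interplay you flag at the end of your proposal bites. If you patch the disc by the naive join-with-$K_2$ trick, the resulting roots $\alpha = \beta + 2$ no longer scale as $\alpha \mapsto n\alpha$ under the clique-blow-up that scales $\beta \mapsto n\beta$; instead you get $n\beta + 2$, not $n(\beta+2)$. The paper resolves this by, given $n$, first blowing up the theta graph so that $\beta$ becomes $n\beta$ and \emph{then} joining with $K_{2n}$ rather than $K_2$, yielding $n\beta + 2n = n\alpha$. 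Without this coordinated choice --- join with $K_{2n}$, depending on $n$ --- the closure and density properties cannot be made to coexist on the disc, and the theorem as stated is not proved. (The paper's acknowledgements note that exactly this omission was present in an earlier draft and had to be repaired.)
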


It would be dubious to claim that this constitutes a strengthening of the previously stated analytic results.  After all, we know that the set of all algebraic integers is dense in $\CCC$, and that this set strictly contains the set of all chromatic roots; as density of sets is not a property which can be qualified, we cannot hope to ``improve'' on Sokal's theorem.  However, we are still very much lacking in knowledge of the algebraic properties of the chromatic polynomial\footnote{This is in sharp contrast to the algebraic \emph{methods} expounded by Biggs et al. \cite{Biggs:Algebraic},\cite{Biggs:Matrix}, which have  been used to great effect by Salas and Sokal \cite{Salas:Transfer} among others}, and Theorem \ref{thm:main} does contribute in this respect, as we shall explain.

At the time of writing only a handful of algebraic integers outside of the forbidden intervals have been definitively shown not to be chromatic roots (notable examples are certain Beraha constants---numbers of the form $4\cos^2(\pi/n), n\in \NNN$---and some generalized Fibonacci numbers).  To our knowledge not a single such number with non-zero imaginary part is known.  In the absence of any reliable means via which to rule out a number being a chromatic root, two conjectures were proposed at a Newton Institute workshop in 2008 which address this issue from a quite different perspective (these appear in an as-yet-unpublished manuscript of Cameron and Morgan's \cite{Cameron:Algebraic}).  The first of these, known as the ``$\alpha+n$ conjecture", essentially claims that chromatic polynomials are as ``algebraically diverse'' as can be, in that every number field is contained in the splitting field of some chromatic polynomial.  This has been proved for quadratic fields, initially by participants of the aforementioned workshop. Later a different technique was used and extended to cubic fields by the present author; in \cite{Bohn:Bicliques} an algorithm is given to construct infinite families of bipartite graphs having complements with the desired chromatic splitting field.

The second proposed conjecture (the ``$n\alpha$ conjecture''), which is strengthened by the work presented here, asserts that the set of all chromatic roots is closed under multiplication by positive integers.  Notwithstanding the results of this paper, empirical evidence would appear to be in favour of the general conjecture.  In particular a direct additive analogue follows from the well-known fact that if $\alpha$ is a chromatic root of a graph $G$, then $\alpha +n$ is a chromatic root of the join of $G$ with $K_n$ (this can be shown by a simple counting argument).   However, it is difficult to see a way to approach a full proof of this conjecture.  Theorem \ref{thm:main} is, to the best of our knowledge, the only real evidence to be found since the conjecture was proposed, and while it is certainly noteworthy, it does not entirely succeed in escaping the imprecision inherent in the analytical approach, a leap which will clearly need to be made if we are to further our algebraic understanding of the chromatic polynomial.

The remainder of this paper shall be dedicated to a proof of Theorem \ref{thm:main}.  In order to achieve this we will derive a general formula for a large class of graphs which generalise those used by Sokal to prove the density of chromatic roots in the complex plane.

\section{Clique-theta graphs}

We define a \emph{generalised theta graph} $\Theta_{m_1,\ldots,m_n}$ to be a graph consisting of two vertices joined by $n$ otherwise disjoint paths of lengths $m_1,\ldots,m_n$.  In the study of the location of chromatic roots, generalised theta graphs have been the subject of some interest (see for example \cite{Brown:Theta},\cite{Brown:Negative} and \cite{Hickman:Roots}). Most significantly, these were the graphs that Sokal used in his proof of the density result stated previously\footnote{In fact he was able to prove this result using the considerably smaller subfamily whose members have paths of uniform length.  Note that this restriction allows only for specification of two variables: the number of paths, and length of the paths in a given graph.  It seems little short of astounding that a polynomial furnishing a dense set of zeros upon specialisation into $\NNN$ of just two parameters could occur so naturally}.

With this in mind, it simplifies matters to approach our main result via the following more specialised proposition, which is essentially the $n\alpha$ conjecture restricted to generalised theta graphs.

\begin{prop}\label{prop:main}
If $\alpha$ is a non-integer chromatic root of a generalised theta graph, then $p\alpha$ is a chromatic root for all natural numbers $p$.
\end{prop}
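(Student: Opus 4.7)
The strategy is to prove Proposition~\ref{prop:main} by embedding $\Theta_{m_1,\ldots,m_n}$ into the larger family of clique-theta graphs promised in Section~2, deriving a closed-form expression for the chromatic polynomial of any member of that family with at least one trivial extremal clique, and then using the resulting structure to construct, for each $p\in\NNN$, a concrete clique-theta graph having $p\alpha$ as a chromatic root.

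First I would compute $P_{\Theta_{m_1,\ldots,m_n}}(X)$ directly. Conditioning on whether the two hub vertices receive the same colour and performing a standard two-state recursion on each of the $n$ paths, one finds that the number of proper colourings of a length-$m$ path with equal (resp.\ distinct) pre-assigned endpoint colours is
\[
A_m(X)=\tfrac{1}{X}\bigl((X-1)^m+(-1)^m(X-1)\bigr),\qquad B_m(X)=\tfrac{1}{X}\bigl((X-1)^m-(-1)^m\bigr),
\]
so that $P_{\Theta_{m_1,\ldots,m_n}}(X) = X\prod_i A_{m_i}(X)+X(X-1)\prod_i B_{m_i}(X)$. The non-integer chromatic roots of $\Theta_{m_1,\ldots,m_n}$ are thus precisely those $\alpha$ satisfying $\prod_i A_{m_i}(\alpha)/B_{m_i}(\alpha)=1-\alpha$.

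Next I would repeat the case analysis for a clique-theta graph having a single hub vertex $u$ on one side and a ``fat'' extremal clique $K_p$ on the other, joined by $n$ clique-paths whose attachments to $K_p$ partition the path-indices into blocks $S_1,\ldots,S_p$. The extra vertices inside each internal clique contribute only integer-rooted ``clique-correction'' factors of the form $(X-2)(X-3)\cdots(X-r+1)$, so each clique-path's equal/distinct-endpoint count is $A_k,B_k$ multiplied by such an integer-rooted factor. Summing over proper colourings of $\{u\}\cup K_p$ then gives a factorisation whose non-integer zeros satisfy
\[
(X-p) \;+\; \sum_{j=1}^p \prod_{i\in S_j} \frac{A_{k_i}(X)}{B_{k_i}(X)} \;=\; 0,
\]
which collapses back to $\prod_i A_{m_i}(X)/B_{m_i}(X) = 1-X$ in the degenerate case $p=1$ (when both extremal cliques are trivial and we recover a generalised theta graph).

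Finally, given $\alpha$ as above I would build a clique-theta graph with root $p\alpha$ by choosing the fat extremal clique to be $K_p$, populating each of its $p$ attachment vertices with a copy of the original path-bundle, and adjusting the internal clique-path parameters within each copy so that the resulting block-product $\prod_{i\in S_j} A_{k_i}(p\alpha)/B_{k_i}(p\alpha)$ evaluates to $1-\alpha$; summing the $p$ identical block-contributions then yields $p(1-\alpha)=p-p\alpha$, which exactly cancels the $(X-p)$ term at $X=p\alpha$ and thereby identifies $p\alpha$ as a non-integer chromatic root of this new graph. The hard part is precisely this parameter adjustment: one must produce, for each original path-length $m_i$, a clique-path in the new graph whose $A_k/B_k$-ratio matches $A_{m_i}(\alpha)/B_{m_i}(\alpha)$ when evaluated at $p\alpha$. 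I expect this to come down to the explicit rational form of $A_m$ and $B_m$ furnishing an identity of the shape $A_{m'}(p\alpha)/B_{m'}(p\alpha)=A_m(\alpha)/B_m(\alpha)$ for a specific $m'$ depending on $m$ and $p$, and it is this identity---not the overall combinatorial setup---that really requires the explicit clique-theta formula rather than purely abstract reasoning.
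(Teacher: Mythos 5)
Your plan to attack the problem via clique-theta graphs with one trivial hub is the right framework, and your computation of $A_m,B_m$ and the root condition $\prod_i A_{m_i}(\alpha)/B_{m_i}(\alpha)=1-\alpha$ for a generalised theta graph is correct. But the construction you then propose diverges from what actually works, and the gap you flag as ``the hard part'' is in fact fatal to your route.

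Your idea is to take a fat extremal clique $K_p$, hang $p$ copies of the original path-bundle off its vertices, and then choose new path lengths $m'$ (depending on $m$ and $p$) so that $A_{m'}(p\alpha)/B_{m'}(p\alpha)=A_m(\alpha)/B_m(\alpha)$. No such identity holds. For example, $A_2(X)/B_2(X)=(X-1)/(X-2)$, and there is no choice of $m'$ for which $A_{m'}(pX)/B_{m'}(pX)$ equals $(X-1)/(X-2)$ identically in $X$. You could instead try to arrange the equality numerically at the single value $X=\alpha$, but $\alpha$ is already pinned down by the original root equation, so you would be asking for an unexplained Diophantine coincidence for every $(\alpha,p,m)$ --- and there is no reason to expect one. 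The deeper problem is that you are trying to engineer the scaling by changing \emph{path lengths}, which enter the formula as exponents; exponents do not respond to multiplication by $p$ in the required way.

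What the paper actually does is scale the other parameter: it keeps the underlying theta-graph structure (same number of paths, same path lengths) and replaces every vertex except the trivial hub by a $p$-clique, i.e.\ passes from $T(1,S_1,\ldots,S_n,k)$ to $T(1,pS_1,\ldots,pS_n,pk)$. In the closed-form interesting factor, the clique sizes $a_{i(l)}$ and $k$ appear only through expressions like $(X-a_{i(l)})$, $(-a_{i(l)})$, $(X-k)$, so replacing each by $p$ times itself and dividing the whole expression by the appropriate power of $p$ substitutes $X/p$ for $X$ throughout. Hence if $\alpha$ is a root then $p\alpha$ is a root of the scaled graph --- a clean polynomial identity, with no per-$\alpha$ adjustment required. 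Specialising to $\Theta_{m_1,\ldots,m_n}=T(1,(1,\ldots,1),\ldots,(1,\ldots,1),1)$ then yields the proposition immediately. To repair your proof, drop the idea of duplicating path-bundles and tuning path lengths, and instead blow up every non-hub vertex into a $p$-clique; the explicit clique-theta chromatic polynomial you correctly anticipate needing is then used to verify the exact scaling of the non-linear factor.
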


We will need the following basic properties of the chromatic polynomial, the proof of which can be found in any text on the subject.

\begin{prop}\label{prop:basic}
Let $G$ be a simple graph.
\begin{enumerate}[label=(\roman{enumi})]
\item Let $G\backslash e$ be the graph obtained from $G$ by deleting the edge $e$, and let $G/e$ be obtained by contracting the endpoints of $e$ into a single vertex and deleting any resulting loops and all but one parallel edges.  Then
    \[P_G(X)=P_{G\backslash e}(X)-P_{G/e}(X).\]
    If $e\notin E(G)$, then we have:
    \[P_G(X)=P_{G+e}(X)+P_{G+e/e}(X),\]
    where $G+e$ is the graph is obtained by adding the edge $e$ to $G$.
\item If $G$ is a $K_n$-sum of two subgraphs $H_1$ and $H_2$ (that is, if $H_1 \cup H_2 = G$ and $H_1 \cap H_2 = K_n$), then
    \[P_G(X)=\frac{P_{H_1}(X)P_{H_2}(X)}{(X)_{n}}.\]
\end{enumerate}

\end{prop}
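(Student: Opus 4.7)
Both identities are classical and follow the same meta-strategy: since $P_G(X)$ is defined as the unique monic polynomial agreeing with the proper $q$-colouring count at every $q\in\NNN$, it suffices to prove each identity combinatorially for all positive integers $q$ and then invoke the fact that two polynomials which agree at infinitely many points must be equal. Throughout I fix $q\in\NNN$ and write $P_H(q)$ for the number of proper $q$-colourings of the graph $H$.

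For part (i), fix an edge $e=uv$ of $G$ and partition the proper $q$-colourings of $G\backslash e$ according to whether $u$ and $v$ receive distinct colours. Those with $f(u)\ne f(v)$ are in bijection with the proper $q$-colourings of $G$, since re-inserting $e$ does not violate the colouring condition. Those with $f(u)=f(v)$ are in bijection with the proper $q$-colourings of $G/e$, via the map that assigns to the merged vertex the common colour $f(u)=f(v)$; loops and parallel edges created by the contraction play no role in counting proper colourings, which justifies their removal in the definition of $G/e$. This yields $P_{G\backslash e}(q)=P_G(q)+P_{G/e}(q)$ at every $q\in\NNN$, hence as a polynomial identity, and rearranging gives the stated form. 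The added-edge version is just the same identity applied to $G+e$ in place of $G$.

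For part (ii), let $K=H_1\cap H_2\cong K_n$. Since every edge of $G$ lies in $H_1$ or in $H_2$, a function $f\colon V(G)\to\{1,\ldots,q\}$ is a proper $q$-colouring of $G$ if and only if its restrictions to $H_1$ and to $H_2$ are both proper. Any proper colouring of $G$ restricts further to a proper colouring of $K$, of which there are exactly $(q)_n=q(q-1)\cdots(q-n+1)$. The key observation is that for each fixed proper colouring $c$ of $K$, the number of extensions of $c$ to a proper $q$-colouring of $H_i$ depends only on $q$, not on $c$: any permutation of the palette $\{1,\ldots,q\}$ induces a bijection on proper colourings of $H_i$ which restricts to a transitive action on proper colourings of $K$. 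Denoting this common extension count by $N_i(q)$, we get $P_{H_i}(q)=(q)_n N_i(q)$ and $P_G(q)=(q)_n N_1(q)N_2(q)$, from which the claimed formula follows at once by dividing.

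The symmetry argument in part (ii) is the only place requiring real thought; everything else is bookkeeping. One minor point to watch is the final passage from an equality of counting functions on $\NNN$ to an equality of polynomials in $X$, which in (ii) should be phrased as $(X)_n P_G(X)=P_{H_1}(X)P_{H_2}(X)$ before dividing, so that no rational-function subtleties arise (note that $(X)_n$ divides $P_{H_i}(X)$ as a polynomial, since each $H_i$ contains the clique $K$).
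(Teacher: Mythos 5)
Your proof is correct and is the standard textbook argument for both identities; the paper itself offers no proof of this proposition, remarking only that one ``can be found in any text on the subject,'' so there is nothing in the paper to compare against. One small imprecision in part (i) is worth flagging: the claim that loops created by the contraction ``play no role in counting proper colourings'' is not right as stated, since leaving a loop in place would force $P_{G/e}$ to vanish identically. The real reason loop deletion is built into the definition of $G/e$ is that your bijection sends colourings of $G\backslash e$ with $f(u)=f(v)$ to colourings of the \emph{loop-free} contraction, where the collapsed edge $e$ ceases to impose any constraint; your parenthetical about parallel edges, by contrast, is correct as stated, since proper colourings are insensitive to edge multiplicity. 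Part (ii) is handled cleanly: the palette-permutation argument for the constancy of the extension counts $N_i(q)$ is the right mechanism (and implicitly uses that there are no edges between $V(H_1)\setminus V(K)$ and $V(H_2)\setminus V(K)$, which follows from $G=H_1\cup H_2$), and your closing caution to pass to the polynomial identity $(X)_n P_G(X)=P_{H_1}(X)P_{H_2}(X)$ before dividing is exactly what is needed to cover the values $q<n$ for which $N_i(q)$ is undefined.
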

We will refer to the two identities in Proposition \ref{prop:basic} \emph{(i)} as, respectively, \emph{deletion-contraction} and \emph{addition-contraction}.

Our proof of Proposition \ref{prop:main} entails the examination of a further generalisation of theta graphs (as well as, incidentally, the graphs used to verify the quadratic case of the $\alpha +n$ conjecture), which we shall refer to as \emph{clique-theta graphs}.  We must first introduce some simpler graphs which we shall use as building blocks.

We define the \emph{clique-path} $L(a_1,\ldots,a_n)$ to be a path of length $n$, whose $i$th verteX has been replaced by a clique of size $a_i$, and whose edges have been replaced by all possible edges between neighbouring cliques. Proposition \ref{prop:basic} \emph{(ii)} enables us to quickly ascertain that the chromatic polynomial of this graph is of the form:

\[P_{L(a_1,\ldots,a_n)}(X)=\frac{(X)_{a_1+a_2}\ldots(X)_{a_{n-1}+a_n}}{(X)_{a_2}\ldots(X)_{a_{n-1}}}\]

Clique-paths are a subfamily of what were originally referred to by Read as \emph{clique-graphs}; that is graphs consisting of an underlying structure, whose vertices have been ``blown up'' into cliques.  A more interesting subfamily, introduced in \cite{read:family}, is obtained by using an underlying cycle structur: let $(a_1,\ldots,a_n)$ be an ordered $n$-tuple of natural numbers.  Then the \emph{ring of cliques} $R(a_1,\ldots,a_n)$ is defined to be a simple $n$-cycle, whose $i$th vertex has been replaced by a clique of size $a_i$, and whose edges have been replaced by all possible edges between neighbouring cliques.  In the paper cite above, Read gives the following formula for the chromatic polynomial of this graph:

\[P_{R(a_1,\ldots a_n)}(X)=(X)_{a_1+a_2}\ldots (X)_{a_n+a_1}\sum_{k=0}^n(-1)^{nk}v_k(X)\left(\prod_{i=1}^n\frac{-(a_i)_k}{(X)_{a_i+k}}\right),\]
where $v_k(X)={X \choose k}-{X \choose k-1}$.

Perhaps contrary to first impressions this is indeed a polynomial: the terms in the denominator of the summation are cancelled by some of the preceding linear factors.  Interestingly, a permutation of the $\{a_i\}$ may change the linear factors, but does not affect the final more complicated factor.

\begin{figure}[!htbp]
\begin{center}
\begin{tikzpicture}
  [scale=.4,auto=center,every node/.style={circle,fill=black!100,inner sep=1mm},
    every path/.style={draw,thick,black}]
  \node (n1) at (10,-2) {};
  \node (n2) at (3,3) {};
  \node (n3) at (3,5) {};
  \node (n4) at (10,8) {};
  \node (n5) at (8.5,10) {};
  \node (n6) at (11.5,10) {};
  \node (n7) at (17,5) {};
  \node (n8) at (17,3) {};

 \foreach \from/\to in {n1/n2,n1/n3,n2/n4,n2/n5,n2/n6,n3/n4,n3/n5,n3/n6,n4/n7,n5/n7,n6/n7,n4/n8,n5/n8,n6/n8,n7/n1,n8/n1}
    \path[bend left=15,thick] (\from) edge (\to);

\foreach \from/\to in {n7/n8,n2/n3}
     \path[bend left=10,thick] (\from) edge (\to);

\foreach \from/\to in {n4/n5,n5/n6}
     \path[bend left=5,thick] (\from) edge (\to);

\foreach \from/\to in {n4/n6}
     \path[bend right=5,thick] (\from) edge (\to);

\end{tikzpicture}
\caption{The ring of cliques $R(1,2,3,2)$}
\end{center}
\end{figure}
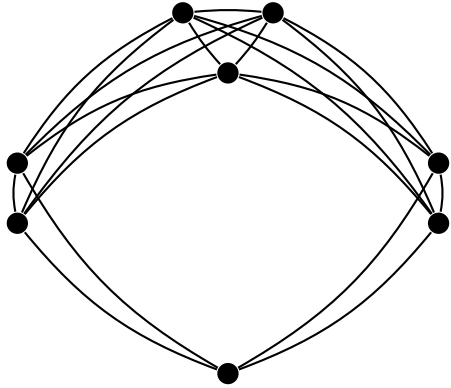

Chromatic polynomials of rings of cliques possess some intriguing properties.  For example, in \cite{Dong:Ring} it is shown that the real part of a non-integer chromatic root of a ring of four cliques is dependent only on the number of vertices in the graph.  As previously mentioned, rings of four cliques were also used in \cite{Cameron:Algebraic} to verify the quadratic case of the $\alpha + n$ conjecture.

In the case $a_1=1$, the chromatic polynomial $P_{R(1,a_2,\ldots a_n)}(X)$ reduces to the following considerably simpler expression (this was also discovered---but not published---by Read; a separate derivation is given in \cite{Dong:Chordal}):

\begin{equation}\label{eqn:cliquering}
X(X-1)_{a_{n-1}+a_n-1}\left(\prod_{i=2}^{n-2}(X-a_{i+1}-1)_{a_i-1}\right)r(1,a_2,\ldots,a_n;X),
\end{equation}
where
\[r(1,a_2,\ldots,a_n;X)=\frac{1}{X}\left(\prod_{i=2}^n(X-a_i)-\prod_{i=2}^n(-a_i)\right).\]
Note that the only (possibly) non-linear factor here is $r(1,a_2,\ldots,a_n;X);$ later we will refer to a factor of this form as the ``interesting factor'' of a ring of cliques.

We are now in a position of being able to compute the chromatic polynomials of the graphs used in the proof of Proposition \ref{prop:main}.  These are also clique-graphs, this time having underlying structures of generalised theta graphs; as such they are generalisations of both rings of cliques and theta graphs.

Formally, we define the \emph{clique-theta graph} $T(j,S_1,\ldots,S_n,k)$ in the following way: let $S_1,\ldots,S_n$ be $n$ non-empty ordered sets of positive integers with $S_i=(a_{i(1)},a_{i(2)},\ldots,a_{i(m_i)})$ for each $1\leq i\leq n$.  For each set $S_i$, let $L(j,S_i,k)$ be a clique path with extremal\footnote{To clarify, we use the term extremal here to refer to endpoints, as opposed to any notion of size} cliques of order $j$ and $k$, and clique sizes otherwise determined by the elements of $S_i$.  Then $T(j,S_1,\ldots,S_n,k)$ is the graph obtained by identifying the extremal cliques of the $\{L(j,S_i,k)\}_{1\leq i\leq n}$.

As might be expected from the complicated formula for general rings of cliques, the chromatic polynomials of these graphs are difficult to compute, and any general formula is likely to be almost impossible to express in any kind of meaningful way.  However, as with rings of cliques, we can considerably simplify the task of finding such a formula by specifying that $j=1$.  It is highly convenient that this restriction is in fact necessary to guarantee the algebraic properties we need to prove Proposition \ref{prop:main}!

\begin{prop}\label{prop:cliquetheta}
The chromatic polynomial of a clique-theta graph $T(1,S_1,\ldots,S_n,k)$ is:

\begin{align*}
&\left[(X)_{a_{n(m_n)}+k}\left(\prod_{i=1}^{n-1}(X-k-1)_{a_{i(m_i)}-1}\right)\left(\prod_{i=1}^n\prod_{l=1}^{m_i-1}(X-a_{i(l+1)}-1)_{a_{i(l)}-1}\right)\right] \\
&\times \left[\left(k(X-k)^{n-1}\prod_{i=1}^nr(1,a_{i(1)},\ldots,a_{i(m_i)};X)\right)+\left(\prod_{i=1}^nr(1,a_{i(1)},\ldots, a_{i(m_i)},k;X)\right)\right],
\end{align*}
where $r(1,a_{i(1)},\ldots,a_{i(m_i)};X)$ is the interesting factor of a ring of cliques\\ $R(1,a_{i(1)},\ldots,a_{i(m_i)})$.
\end{prop}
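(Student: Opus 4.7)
My plan is to decompose $T = T(1, S_1, \ldots, S_n, k)$ via a case analysis on the coloring of the extremal trivial clique (whose single vertex I denote $v$) and the extremal $k$-clique (whose vertex set I denote $K$). Since $v$ is non-adjacent to every vertex of $K$ in $T$, let $e_1, \ldots, e_k$ be the $k$ missing edges from $v$ to $K$; iterating the addition-contraction identity on these edges gives
\[
P_T(X) \;=\; P_{T^+}(X) \;+\; k\cdot P_{T^{\mathrm{id}}}(X),
\]
where $T^+ = T + e_1 + \cdots + e_k$ (so that $\{v\} \cup K$ induces a $K_{k+1}$) and $T^{\mathrm{id}}$ is $T$ with $v$ identified with any fixed vertex of $K$. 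The multiplicity $k$ arises from the symmetry of $K$, together with the fact that parallel edges created during successive contractions collapse into existing edges of $K$.

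Next I would compute $P_{T^+}$. For each $i$, the subgraph of $T^+$ induced by the vertices of the $i$th clique-path is precisely the ring of cliques $R(1, S_i, k)$, and these $n$ rings pairwise intersect in the shared $K_{k+1}$ on $\{v\} \cup K$. Iterated application of Proposition \ref{prop:basic}(ii) therefore gives
\[
P_{T^+}(X) \;=\; \frac{\prod_{i=1}^n P_{R(1, S_i, k)}(X)}{(X)_{k+1}^{n-1}}.
\]
An analogous argument for $T^{\mathrm{id}}$---whose branches $L^{\mathrm{id}}_i$ (the result of merging $v$ into a vertex of $K$ inside each clique-path) pairwise intersect in just $K$---gives
\[
P_{T^{\mathrm{id}}}(X) \;=\; \frac{\prod_{i=1}^n P_{L^{\mathrm{id}}_i}(X)}{(X)_{k}^{n-1}}.
\]

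The central structural observation is that $L^{\mathrm{id}}_i$ itself admits a clean clique-sum decomposition: it is the union of the ring $R(1, S_i)$ (on $\{v\} \cup K_{a_{i(1)}} \cup \cdots \cup K_{a_{i(m_i)}}$) with the clique $K_{k + a_{i(m_i)}}$ (on $K \cup K_{a_{i(m_i)}}$), intersecting in the common subclique $\{v\} \cup K_{a_{i(m_i)}}$, which is itself a $K_{1+a_{i(m_i)}}$ because $\{v\}$ and $K_{a_{i(m_i)}}$ are adjacent in the cyclic structure of $R(1, S_i)$. Proposition \ref{prop:basic}(ii) then yields
\[
P_{L^{\mathrm{id}}_i}(X) \;=\; \frac{P_{R(1, S_i)}(X)\,(X)_{k + a_{i(m_i)}}}{(X)_{1 + a_{i(m_i)}}}.
\]

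Finally, I would expand $P_{R(1, S_i, k)}$ and $P_{R(1, S_i)}$ using the ring-of-cliques formula (\ref{eqn:cliquering}), and apply the elementary falling-factorial identities $X(X-1)_{a+k-1} = (X)_{k+1}(X-k-1)_{a-1}$ and $(X)_{k+a} = (X)_k(X-k)(X-k-1)_{a-1}$ to extract a common prefactor. This collapses $P_{T^+}$ and $k\cdot P_{T^{\mathrm{id}}}$ into, respectively, the $\prod_i r(1, S_i, k; X)$ and $k(X-k)^{n-1}\prod_i r(1, S_i; X)$ summands of the statement, with the shared prefactor matching $(X)_{a_{n(m_n)}+k}\prod_{i=1}^{n-1}(X-k-1)_{a_{i(m_i)}-1}\prod_{i,\ell}(X-a_{i(\ell+1)}-1)_{a_{i(\ell)}-1}$. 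The main obstacle is the non-obvious decomposition of $L^{\mathrm{id}}_i$ as a $K_{1+a_{i(m_i)}}$-sum of a ring of cliques and a clique; once that is pinpointed, everything else is routine algebraic bookkeeping.
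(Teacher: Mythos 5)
Your proof is correct, and it takes a genuinely different route from the paper's. The paper proceeds by induction on the clique-path lengths: the base case $|S_i|=1$ for all $i$ is handled by exactly the addition-contraction argument you open with (iterating on the $k$ missing $v$-to-$K$ edges to write the answer as $g(X)+k\,f(X)$), and the general case is then reduced to it via Lemma \ref{lem:thetarecur}, a recursion obtained by deletion-contraction on the edges from $v$ to the first interior clique. You instead apply the addition-contraction step directly to an arbitrary $T(1,S_1,\ldots,S_n,k)$ and resolve both resulting graphs in closed form by repeated clique-sums: $T^{+}$ decomposes into the rings $R(1,S_i,k)$ over the shared $K_{k+1}$, while $T^{\mathrm{id}}$ decomposes into the branches $L_i^{\mathrm{id}}$ over the shared $K_k$, and each $L_i^{\mathrm{id}}$ is in turn the $K_{1+a_{i(m_i)}}$-sum of $R(1,S_i)$ with $K_{k+a_{i(m_i)}}$. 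That last decomposition is the genuinely new structural ingredient and it is correct: the crucial point, which you identify, is that the image $u$ of $v$ under contraction lies inside $K$, hence is adjacent to all of $K_{a_{i(m_i)}}$, so the two pieces really do meet in a clique. I checked the resulting algebra against formula (\ref{eqn:cliquering}) and your two falling-factorial identities, and the common prefactor and the two summands of the statement do come out, with $T^{+}$ supplying the $\prod_i r(1,S_i,k;X)$ term and $k\,T^{\mathrm{id}}$ supplying the $k(X-k)^{n-1}\prod_i r(1,S_i;X)$ term. What your approach buys is a non-inductive, single-shot derivation in which the two summands of the formula visibly correspond to the two graphs $T^{+}$ and $T^{\mathrm{id}}$; what the paper's approach buys is that it never needs to analyse $T^{\mathrm{id}}$ for general path lengths, replacing that structural work with a bookkeeping induction whose base case is the easy $|S_i|=1$ situation.
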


The proof of this result can be derived using standard identities for the chromatic polynomial, however the following lemma will greatly simplify its presentation.
\begin{lem}\label{lem:thetarecur}
Let $S_1=(a_{1(1)},\ldots,a_{1(m_1)})$, and let $\bar{S_1}=(a_{1(2)},\ldots,a_{1(m_1)}).$  Then:
\begin{align*}
P_{T(1,S_1,\ldots,S_n,k)}(X)&=\frac{P_{T(1,S_2,\ldots,S_n,k)}(X)P_{L(a_{1(1)},\ldots,a_{1(m_1)},k)}(X)}{(X)_k}\\
                            & \hspace{10mm} -a_{1(1)}\frac{(X)_{a_{1(1)}+a_{1(2)}}P_{T(1,\bar{S_1},\ldots,S_n,k)}(X)}{(X)_{a_{1(2)}+1}}.\\
\end{align*}
\end{lem}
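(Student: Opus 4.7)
The plan is to prove the recursion by severing the first path $S_1$ from the graph, then correcting for the lost edges via a combinatorial argument.

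Set $G = T(1, S_1, \ldots, S_n, k)$ and let $v$ denote the degree-one hub, $K$ the $K_k$-hub, and $A$, $B$ the first two cliques on path $1$ (of sizes $a_{1(1)}$ and $a_{1(2)}$). First I would remove from $G$ the $a_{1(1)}$ edges between $v$ and $A$, calling the resulting graph $G^*$. In $G^*$, the clique-path $L(a_{1(1)}, \ldots, a_{1(m_1)}, k)$ (what remains of path $1$ after deleting $v$) meets the rest of the graph only in $K$, exhibiting $G^*$ as a $K_k$-sum of $L(a_{1(1)}, \ldots, a_{1(m_1)}, k)$ with $T(1, S_2, \ldots, S_n, k)$. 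Proposition \ref{prop:basic}(ii) then yields the first summand in the claimed formula for $P_G$, namely $P_{G^*}(X)$.

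The second step is a combinatorial identity: I would show that $P_{G^*}(X) - P_G(X) = a_{1(1)}\, P_{G^*/vu}(X)$ for any fixed $u \in A$, where $G^*/vu$ denotes the graph obtained by identifying the (non-adjacent) vertices $v$ and $u$. In any proper colouring of $G^*$ the clique $A$ receives $a_{1(1)}$ distinct colours, so $v$ can share a colour with at most one vertex of $A$. The proper colourings of $G^*$ which fail to be proper colourings of $G$ are therefore partitioned by the choice of $u \in A$ with which $v$ collides, and each class is in bijection with the proper colourings of $G^*/vu$; the factor $a_{1(1)}$ comes from the fact that all vertices of $A$ lie in the same orbit of the automorphism group of $G^*$ (they share the same neighbourhood outside $A$, namely $B$).

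Finally I would identify $G^*/vu$ explicitly. The merged vertex $v'$ inherits the neighbourhoods of both $v$ and $u$, so $\{v'\} \cup (A \setminus \{u\}) \cup B$ becomes a clique of size $a_{1(1)} + a_{1(2)}$ in which only $B$ is attached to the third clique of path $1$. This is precisely the graph obtained from $T(1, \bar{S_1}, S_2, \ldots, S_n, k)$ by $K_{a_{1(2)}+1}$-summing it, along the clique formed by the degree-one hub and the now-first clique of path $1$, with $K_{a_{1(1)}+a_{1(2)}}$. A second application of Proposition \ref{prop:basic}(ii) then gives
\[
P_{G^*/vu}(X) \;=\; \frac{(X)_{a_{1(1)}+a_{1(2)}}\, P_{T(1,\bar{S_1},S_2,\ldots,S_n,k)}(X)}{(X)_{a_{1(2)}+1}},
\]
and substituting into the identity from the second step completes the proof. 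The main care required is in this last step: one must verify carefully that $G^*/vu$ has exactly the edges claimed, in particular that $v'$ is not adjacent to the third clique of path $1$ and that $A \setminus \{u\}$ picks up no spurious adjacencies elsewhere on that path. The degenerate case $m_1 = 1$ should be checked separately, reading $\bar{S_1}$ as empty and $a_{1(2)}$ as $k$.
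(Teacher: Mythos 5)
Your proposal is correct and is essentially the paper's own argument: delete the $a_{1(1)}$ edges between the degree-one hub $v$ and the first clique $A$, recognise the result as a $K_k$-sum, and account for the defect by a contraction which (for any choice of the $a_{1(1)}$ possible edges) produces the same $K_{a_{1(2)}+1}$-sum. The only cosmetic difference is that the paper phrases the defect-accounting as an iterated application of the deletion--contraction identity, while you phrase it as a direct count of proper colourings of $G^*$ in which $v$ collides with some vertex of $A$; both hinge on the same observation that $G^*/vu$ is the same graph for every $u\in A$, and your remark about the degenerate case $m_1=1$ is sensible but not needed for the induction in Proposition~\ref{prop:cliquetheta}, which always applies the lemma with $m_1\geq 2$.
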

\begin{proof}
This follows from a simple application of the deletion-contraction identity.  Let $v$ be the singleton endpoint verteX of the clique-theta graph.  Deleting the edges between $v$ and the $a_{1(1)}$-clique produces a $K_k$-sum of $T(1,S_2,\ldots,S_n,k)$ and $L(a_{1(1)},\ldots,a_{1(m_1)},k)$.  Contracting one of these edges turns the graph into a $K_{a_{1(2)}+1}$-sum of $T(1,\bar{S_1},\ldots,S_n,k)$ and $K_{a_{1(1)}+a_{1(2)}}$.  As the contraction of any of these edges produces the same graph, the chromatic polynomial of the latter appears with multiplicity $a_{1(1)}$.
\end{proof}

We can now proceed by induction on the sizes of the sets $S_i$.

\begin{proof}[Proof of Proposition \ref{prop:cliquetheta}]
First suppose that the size of each set is 1; that is, suppose $S_i=(a_{i(1)})$ for all $i$.  Let $v$ be the singleton extremal vertex, which is in this case connected to all other vertices of the graph apart from those of the $k$-clique.  Note that contracting any added edge between $v$ and the $k$-clique produces a $K_k$-sum of $(a_{i(1)}+k)$-cliques.  This graph has chromatic polynomial:
\[f(X)=\frac{\prod_{i=1}^n(X)_{a_{i(1)+k}}}{(X)_k^{n-1}}.\]
Also note that adding all edges between $v$ and the $k$-clique gives a $K_{k+1}$-sum of $(a_{i(1)}+k+1)$-cliques, having chromatic polynomial:
\[g(X)=\frac{\prod_{i=1}^n(X)_{a_{i(1)+k+1}}}{(X)_{k+1}^{n-1}}.\]
We now apply the contraction-addition identity $k$ times, where ``addition'' consists of adding an edge between $v$ and the $k$-clique, and ``contraction'' consolidates the two vertices between which the new edge is to be added.  At every stage the consolidation of these two vertices will produce the graph with chromatic polynomial $f(X)$, and so our final graph will have chromatic polynomial which is a sum of $g(X)$ with $k$ copies of $f(X)$, taking the following form:
\begin{align*}
                           &\left(k(X)_{a_{n(1)}+k}\prod_{i=1}^{n-1}(X-k)_{a_{i(1)}}\right)+\left((X)_{a_{n(1)}+k+1}\prod_{i=1}^{n-1}(X-k-1)_{a_{i(1)}}\right)\\
                           &=\left((X)_{a_{n(1)}+k}\prod_{i=1}^{n-1}(X-k-1)_{a_{i(1)}-1}\right)\left(k(X-k)^{n-1}+\prod_{i=1}^n(X-a_{i(1)}-k)\right).\\
\end{align*}

Note that $r(1,a_{i(1)};X)=1$ and $r(1,a_{i(1)},k;X)=X-a_{i(1)}-k$ for all $i$.  Hence Proposition \ref{prop:cliquetheta} holds when $|S_i|=1$ for all $i$.

These graphs suffice as the base case for the induction, as we can build up any clique-theta graph by starting with one having $|S_i|=1$ for all $i$, and systematically increasing the length of the clique-paths.  Reordering the $S_i$ does not alter the graph, so for ease of notation we can assume that at each stage the clique-path to which we are adding a new clique is $L(1,S_1,k)$.  In a similar way, at each stage we can shift the labelling of the individual cliques up one, so that the new element being added to $S_1$ is always labelled $a_{1(1)}$.

\begin{figure}[hbtp]
\begin{minipage}[b]{0.4\linewidth}
\centering
\begin{tikzpicture}
 [scale=.4,auto=center,every node/.style={circle,fill=black!100,inner sep=1mm},
    every path/.style={draw,thick,black}]
  \node (n1a) at (-5,0.5) {};
  \node (n2a) at (-9.5,4) {};
  \node (n5a) at (-9.5,10) {};
  \node (n7a) at (-5,13.5) {};
  \node (n10a) at (-5.7,7) {};
  \node (n12a) at (-1.2,2) {};
  \node (n13a) at (.3,7) {};
  \node (n15a) at (-1.2,11) {};

  \foreach \from/\to in {n1a/n10a,n10a/n7a}
  \path[bend left=3,thick] (\from) edge (\to);
    \path[bend left=18,thick] (n1a) edge (n2a);
    \path[bend left=20,thick] (n2a) edge (n5a);
    \path[bend left=18,thick] (n5a) edge (n7a);
    \foreach \from/\to in {n1a/n12a,n12a/n13a,n13a/n15a,n15a/n7a}
    \path[bend right=12,thick] (\from) edge (\to);
   \end{tikzpicture}
\end{minipage}
\begin{minipage}[t]{0.1\linewidth}
\centering
\begin{tikzpicture}[scale=.3,auto=center,every node/.style={circle,fill=black!100,inner sep=1mm},
    every path/.style={draw,thick,black}]
    \begin{scope}[yshift=50mm]
    \path[bend left=18,dotted,thick] (0,10) edge[white,->] (2,10);
    \path[bend right=18,dotted,thick] (0,-3) edge[white,->] (2,-3);
    \path[dotted,thick] (0,4) edge[->] (2,4);
\end{scope}
\end{tikzpicture}

\end{minipage}
\begin{minipage}[t]{0.4\linewidth}
\centering
\begin{tikzpicture}
    [scale=.4,auto=center,every node/.style={circle,fill=black!100,inner sep=1mm},
    every path/.style={draw,thick,black}]
  \node (n1) at (10,0) {};
  \node (n2) at (4.3,3.6) {};
  \node (n3) at (6.2,4) {};
  \node (n4) at (4.3,10.4) {};
   \node (n5) at (6.2,10) {};
  \node (n7) at (8.5,14) {};
  \node (n8) at (11.5,14) {};
  \node (n9) at (10,6) {};
  \node (n10) at (8.5,8) {};
  \node (n11) at (11.5,8) {};
  \node (n12) at (14,2) {};
  \node (n13) at (14.7,7) {};
  \node (n14) at (16.4,7.2) {};
  \node (n15) at (14,11) {};

  \foreach \from/\to in {n2/n3,n4/n5,n1/n9,n9/n10,n9/n11,n10/n11,n10/n7,n11/n8,n13/n14}
    \draw (\from) -- (\to);

   \foreach \from/\to in {n1/n2,n1/n3,n2/n4,n2/n5,n3/n4,n3/n5,n4/n7,n5/n7,n4/n8,n5/n8,n7/n8,n1/n10,n10/n8}
   \path[bend left=10,thick] (\from) edge (\to);
   \foreach \from/\to in {n1/n12,n12/n14,n14/n15,n15/n7,n15/n8,n1/n11,n11/n7}
   \path[bend right=10,thick] (\from) edge (\to);
   \foreach \from/\to in {n12/n13,n13/n15,n9/n7}
   \path[bend right=5,thick] (\from) edge (\to);
   \path[bend left=5,thick] (n9) edge (n8);
   \end{tikzpicture}
\end{minipage}
\caption{The clique-theta graph $T(1,(2,2),(3),(1,2,1),2)$, and its underlying generalised theta graph $\Theta_{4,3,5}$}
\end{figure}
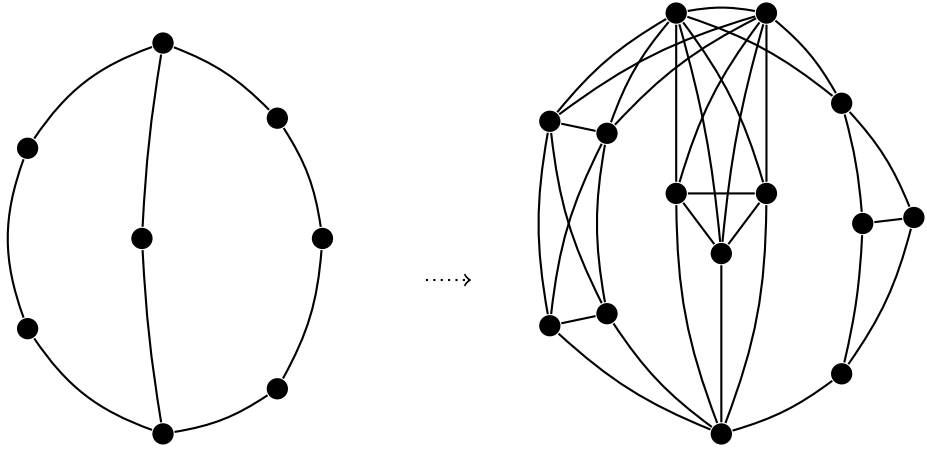

Thus, by Lemma \ref{lem:thetarecur}, we need only show that if Proposition \ref{prop:cliquetheta} holds for $T(1,S_2,\ldots,S_n,k)$ and $T(1,\bar{S_1},\ldots,S_n,k)$ then it holds too for $T(1,S_1,\ldots,S_n,k)$.  So let us assume that $T(1,S_2,\ldots,S_n,k)$ and $T(1,\bar{S_1},\ldots,S_n,k)$ have chromatic polynomials of the stated form, and define:
\[f(X)=(X)_{a_{n(m_n)}+k}\left(\prod_{i=1}^{n-1}(X-k-1)_{a_{i(m_i)}-1}\right)\left(\prod_{i=1}^n\prod_{l=1}^{m_i-1}(X-a_{i(l+1)}-1)_{a_{i(l)}-1}\right).\]

Then removing $f(X)$ as a factor from the expressions
\[\frac{1}{{(X)_k}}\left(P_{T(1,S_2,\ldots,S_n,k)}(X)P_{L(a_{1(1)},\ldots,a_{1(m_1)},k)}(X)\right)\]
and
\[\frac{1}{(X)_{a_{1(2)}+1}}\left((X)_{a_{1(1)}+a_{1(2)}}P_{T(1,\bar{S_1},\ldots,S_n,k)}(X)\right)\]
leaves us with, respectively:
\begin{equation}\label{eqn:1}
  \begin{aligned}
\prod_{l=2}^{m_1} &(X-a_{1(l)})k(X-k)^{n-1}\prod_{i=2}^nr(1,a_{i(1)},\ldots, a_{i(m_i)};X)\\
 & +(X-k)\left(\prod_{l=2}^{m_1}(X-a_{1(l)})\right)\left(\prod_{i=2}^nr(1,a_{i(1)},\ldots,a_{i(m_i)},k;X)\right),
\end{aligned}
\end{equation}
and
\begin{equation}\label{eqn:2}
  \begin{aligned}
k(X-k)^{n-1}&r(1,a_{1(2)},\ldots a_{1(m_1)};X)\left(\prod_{i=2}^nr(1,a_{i(1)},\ldots a_{i(m_i)};X)\right)\\
&+r(1,a_{1(2)},\ldots a_{1(m_1)},k)\left(\prod_{i=2}^nr(1,a_{i(1)},\ldots a_{i(m_i)},k;X)\right).
\end{aligned}
\end{equation}

Now, we define the interesting factor of the chromatic polynomial of a general clique-theta graph in much the same as we did for rings of cliques, to be that factor which is left upon dividing out all linear factors.  By Lemma \ref{lem:thetarecur}, the interesting factor of the chromatic polynomial of $T(1,S_1,\ldots,S_n,k)$ is obtained by subtracting (\ref{eqn:2}) $a_{1(1)}$ times from (\ref{eqn:1}).  This gives:
\begin{align*}
&\left(k(X-k)^{n-1}\prod_{i=2}^nr(1,a_{i(1)},\ldots a_{i(m_i)})\right)\left[\left(\prod_{l=2}^{m_1}(X-a_{1(l)})\right)-a_{1(1)}r(1,a_{1(2)},\ldots a_{1(m_1)})\right]\\
&+\left(\prod_{i=2}^nr(1,a_{i(1)},\ldots,a_{i(m_i)},k)\right)\left[(X-k)\left(\prod_{l=2}^{m_1}(X-a_{1(l)})\right)-a_{1(1)}r(1,a_{1(2)},\ldots a_{1(m_1)},k)\right],
\end{align*}
(note that we have removed the $X$'s from our notation for the interesting factors of rings of cliques here; this is simply so as to improve presentation).

To complete the proof it suffices simply to note that
\[\left(\prod_{l=2}^{m_1}(X-a_{1(l)})\right)-a_{1(1)}r(1,a_{1(2)},\ldots a_{1(m_1)};X)=r(1,a_{1(a)},\ldots a_{1(m_1)};X).\]
\end{proof}

\section{Proof of the main result}

We are now in a position to be able to prove the main theorem of this paper.  First of all, Proposition \ref{prop:main} is obtained as a corollary of the following result.
\begin{prop}\label{prop:palpha}
Let $S_i=(a_{i(1)},\ldots,a_{i(m)})$, let $pS_i=(pa_{i(1)},\ldots,pa_{i(m)}),$ and let $\alpha$ be a non-integer chromatic root of the clique-theta graph $T(1,S_1,\ldots,S_n,k)$.  Then for any natural number $p$, the product $p\alpha$ is a chromatic root of $T(1,pS_1,\ldots,pS_n,pk)$.
\end{prop}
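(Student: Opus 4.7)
The plan is to exploit the explicit chromatic polynomial of Proposition \ref{prop:cliquetheta}, which splits as a ``linear part'' (the first bracket, a product of falling factorials whose roots are all non-negative integers) times an ``interesting factor'' (the second bracket). Since $\alpha$ is non-integer by hypothesis, it cannot be a root of the linear part, so it must be a root of the interesting factor. My strategy is therefore to show that the interesting factor transforms in a controlled way under the substitution $a_{i(l)}\mapsto pa_{i(l)}$, $k\mapsto pk$, $X\mapsto pX$, so that $p\alpha$ becomes a root of the interesting factor of $T(1,pS_1,\ldots,pS_n,pk)$.

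The main step is the following homogeneity property of the ring-of-cliques ``interesting factor'': from the defining expression
\[r(1,b_1,\ldots,b_m;X)=\frac{1}{X}\left(\prod_{l=1}^m(X-b_l)-\prod_{l=1}^m(-b_l)\right),\]
a direct computation pulling $p$ out of each factor in both products, and cancelling one $p$ against the $\frac{1}{X}\mapsto\frac{1}{pX}$, yields the identity
\[r(1,pb_1,\ldots,pb_m;pX)=p^{m-1}\,r(1,b_1,\ldots,b_m;X).\]
I would verify this identity first, as every subsequent claim reduces to it.

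Next I would apply this identity termwise to the interesting factor
\[k(X-k)^{n-1}\prod_{i=1}^n r(1,a_{i(1)},\ldots,a_{i(m_i)};X)+\prod_{i=1}^n r(1,a_{i(1)},\ldots,a_{i(m_i)},k;X).\]
Set $M=\sum_{i=1}^n m_i$. Under the rescaling, the first summand picks up a factor of $p\cdot p^{n-1}\cdot\prod_i p^{m_i-1}=p^{1+(n-1)+(M-n)}=p^M$, while the second summand picks up $\prod_i p^{m_i}=p^M$. The key point is that both summands scale by the \emph{same} power of $p$, so $p^M$ factors out cleanly: the interesting factor of $T(1,pS_1,\ldots,pS_n,pk)$ evaluated at $pX$ equals $p^M$ times the interesting factor of $T(1,S_1,\ldots,S_n,k)$ evaluated at $X$.

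Since $\alpha$ is a root of the latter, $p\alpha$ is a root of the former, hence $p\alpha$ is a chromatic root of $T(1,pS_1,\ldots,pS_n,pk)$, establishing the proposition. The only real subtlety is the bookkeeping that makes the two summands scale by the identical factor $p^M$; this is what makes the construction work, and is precisely why the specialisation $j=1$ was essential in Proposition \ref{prop:cliquetheta}, so that the singleton apex contributes no extra $p$-dependence that could break the balance.
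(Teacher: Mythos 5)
Your proof is correct and takes essentially the same route as the paper: both show that the interesting factor of Proposition \ref{prop:cliquetheta} is homogeneous of a fixed degree under the simultaneous rescaling $a_{i(l)}\mapsto pa_{i(l)}$, $k\mapsto pk$, $X\mapsto pX$, with the two summands scaling by the same power of $p$. Your homogeneity identity $r(1,pb_1,\ldots,pb_m;pX)=p^{m-1}r(1,b_1,\ldots,b_m;X)$ packages the bookkeeping a little more cleanly than the paper (which leaves the $1/X$ unsubstituted, divides by $p^{s}$ with $s=\sum(m_i+1)$, and implicitly absorbs an extra $1/p^n$ when evaluating at $p\alpha$), and you also make explicit a point the paper leaves tacit, namely that $\alpha$ being non-integer forces it to be a root of the interesting factor rather than of the linear part.
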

\begin{proof}
We need only consider the non-linear factor of the chromatic polynomial.  For $T(1,S_1,\ldots,S_n,k)$ this is, by Proposition \ref{prop:cliquetheta}:
\begin{equation}\label{eqn:thetaalpha}
\left[k(X-k)^{n-1}\prod_{i=1}^nr(1,a_{i(1)},\ldots, a_{i(m_i)};X)\right]+\left[\prod_{i=1}^nr(1,a_{i(1)},\ldots a_{i(m_i)},k;X)\right].
\end{equation}

Expanding the interesting factors of the rings of cliques, this becomes
\begin{align}\label{eqn:thetaalpha}
&\left[k(X-k)^{n-1}\prod_{i=1}^n\frac{1}{X}\left(\prod_{l=1}^{m_i}(X-a_{i(l)})-\prod_{l=1}^{m_i}(-a_{i(l)})\right)\right] \nonumber \\
& \hspace{30mm} +\left[\prod_{i=1}^n\frac{1}{X}\left((X-k)\prod_{l=1}^{m_i}(X-a_{i(l)})+k\prod_{l=1}^{m_i}(-a_{i(l)})\right)\right].
\end{align}

For $T(1,pS_1,\ldots,pS_n,pk)$, we have
\begin{align}\label{eqn:thetapalpha1}
&\left[pk(X-pk)^{n-1}\prod_{i=1}^n\frac{1}{X}\left(\prod_{l=1}^{m_i}(X-pa_{i(l)})-\prod_{l=1}^{m_i}(-pa_{i(l)})\right)\right] \nonumber \\
& \hspace{30mm} +\left[\prod_{i=1}^n\frac{1}{X}\left((X-pk)\prod_{l=1}^{m_i}(X-pa_{i(l)})+pk\prod_{l=1}^{m_i}(-pa_{i(l)})\right)\right].
\end{align}

Let $s=\sum_{i=1}^n(m_i+1).$  Then dividing (\ref{eqn:thetapalpha1}) by $p^s$ produces
\begin{align}\label{eqn:thetapalpha2}
&\left[k(X/p-k)^{n-1}\prod_{i=1}^n\frac{1}{X}\left(\prod_{l=1}^{m_i}(X/p-a_{i(l)})-\prod_{l=1}^{m_i}(-a_{i(l)})\right)\right] \nonumber \\
& \hspace{30mm} +\left[\prod_{i=1}^n\frac{1}{X}\left((X/p-k)\prod_{l=1}^{m_i}(X/p-a_{i(l)})+k\prod_{l=1}^{m_i}(-a_{i(l)})\right)\right],
\end{align}

and if $\alpha$ is a zero of (\ref{eqn:thetaalpha}), then $p\alpha$ is a zero of (\ref{eqn:thetapalpha2}).
\end{proof}

Proposition \ref{prop:main} can now be verified as follows.

\begin{proof}[Proof of Proposition \ref{prop:main}]
Let $\Theta_{m_1,\ldots,m_n}$ be a generalised theta graph having a non-integer chromatic root $\alpha$.  If, for each $1\leq i\leq n$, we let $S_i$ be the ordered $(m_i-2)$-tuple $(1,1,\ldots,1)$, then we may express $\Theta_{m_1,\ldots,m_n}$ instead as $T(1,S_1,\ldots,S_n,1).$  Thus $\alpha$ is a chromatic root of $T(1,S_1,\ldots,S_n,1),$ and so by Proposition \ref{prop:palpha}, $p\alpha$ is a chromatic root of $T(1,pS_1,\ldots,pS_n,p)$ for any $p\in \NNN.$
\end{proof}

There still remains some work to do in order to prove the existence of a set satisfying the properties claimed in the statement of Theorem \ref{thm:main}.  Sokal showed in \cite{Sokal:Dense} that the chromatic root of generalised theta graphs are dense in the whole complex plane \emph{wih the exception of the disc} $\{z\in\CCC: |z-1|<1\}$.  He was able to achieve the full result by simply observing that the chromatic roots of any graph can be ``shifted up'' by 2 simply by joining the graph in question with a copy of $K_2$.

We have a slightly more difficult task in filling this hole, as we must do something similar, but in such a way as to preserve the all-important ``closure under positive integer multiplication'' property.   As it turns out, Sokal's simple fix can be adapted in a quite pleasing way.  The final part of our verification of Theorem \ref{thm:main} is a direct proof for the excluded disc in question.
\begin{proof}[Proof for the disc $|z-1|<1$, and thus of Theorem \ref{thm:main}]
Let $z \in \mathbb{C}$ be such that $|z-1|<1$, and let $\epsilon >0$ be arbitrarily small (in particular we may assume that $\epsilon < |2-\Re(z)|$).  We wish to show that there is a chromatic root $\alpha$ which satisfies the following two conditions:
\begin{enumerate}[label=\emph{(\alph*)}]
\item $|\alpha - z|< \epsilon$
\item  For all natural numbers $p$ there exists a graph having chromatic root $p\alpha$
\end{enumerate}

Let $w=z-2$.  Then, by Sokal's result, there exists some generalised theta graph $G_1$ having a chromatic root $\beta$ such that $|\beta - w|<\epsilon$.  Now let $G_2$ be the join of $G_1$ with $K_2$, and let $\alpha = \beta+2$.  Then $\alpha$ is a chromatic root of $G_2$ by Proposition \ref{prop:basic} \emph{(i)}, and we have:
\[|\alpha - z| = |\beta - w| < \epsilon,\]
thus proving part \emph{(a)}.

For part \emph{(b)}, let $n$ be any natural number, and let $G_n$ be the clique-theta graph obtained from $G_1$ by blowing up all but one endpoint vertex into a clique of size $n$.  By Proposition \ref{prop:main}, $G_n$ then has a chromatic root $n\beta$, and so the join of $G_n$ with $K_{2n}$ has a chromatic root:
\[n\beta + 2n = n(\beta+2) = n\alpha\]

Finally we can explicitly construct a set whose existence is claimed in Theorem \ref{thm:main}.  Let $S$ be the set containing all clique-theta graphs (with at least one trivial extremal clique), along with every possible join of these graphs with a complete graph of even order.  Then the chromatic roots of the elements of $S$ satisfy the two conditions of Theorem \ref{thm:main}.
\end{proof}

\section{Concluding remarks}
It is natural to ask whether or not the rare direct correspondence between graph structure and chromatic roots exploited in this paper might generalise to a larger class of graphs, or hold for an unrelated one.  Our searches have proved fruitless in this respect; in particular, computations with series-parallel graphs and outerplanar graphs---the most natural families to try, due to similarities in structure and size---have turned up not a single example of a different type of graph whose chromatic roots increase in size by the same factor as its blown-up vertex-cliques. It is tempting to speculate that this property may be unique to these graphs; it would be interesting to investigate if this is so, and why.

We would also draw attention to the potential of the formula derived for the chromatic polynomials of clique-theta graphs for computations.  Clique-theta graphs are now---to the best of our knowledge, and in a loose sense---the largest family of graphs for which a single parametrised chromatic polynomial formula is known.  They generalise a number of graphs with interesting chromatic properties, and combine their usefulness for computations.  For example, rings of cliques have been of much value in algebraic investigation of the chromatic polynomial due to the fact that, once the path-lengths are chosen, varying the parameters setting clique size does not change the degree of the polynomial.  On the other hand, the other well-studied specialisations---generalised theta graphs---have chromatic polynomials containing variable parameters as exponents, enabling the study of chromatic roots of arbitrarily large absolute value as in Sokal's work.  These two attributes combine in the formula we provide here, and we hope it will prove useful as a large and varied source of chromatic polynomials.

\subsection*{Acknowledgements}
This paper was largely written while under the supervision of Peter Cameron at Queen Mary, University of London.  I would like to thank Prof. Cameron for suggesting this field of research, and for his helpful comments and advice.  My thanks also go to the EPSRC, without whose financial support this work would not have been possible.  Finally, I am much obliged to Bill Jackson for pointing out that my original proof of Theorem \ref{thm:main} did not cover the disc $|z-1|<1$, and to an anonymous referee, who also spotted this omission, and gave various other suggestions which led to a great improvement in the presentation of this paper.

\bibliographystyle{plain}
\bibliography{CliqueThetas2}
\end{document}